\theoremstyle{plain}
\newtheorem{thm}{Theorem}
\newtheorem{lem}{Lemma}
\theoremstyle{definition}
\newtheorem{rem}{Remark}
\numberwithin{equation}{section}
\title{Eells-Sampson type result of Symphonic maps}
\author{Xiang-Zhi Cao\thanks{School of Information Engineering, Nanjing Xiaozhuang University, Nanjing 211171, China}\,\,\thanks{Supported by General Project of Natural Science (Basic Science) Research of Colleges and
		Universities in Jiangsu Province (No. 22KJD110004)}}
\newcommand{\cnabstractname}{摘要}
\newcommand{\enabstractname}{Abstract}
\begin{document}
	
	\maketitle
	
	\begin{abstract}
		In this paper, we  obtained Eells-Sampson type result of Symphonic maps.
	\end{abstract}
		\section{Introduction}\label{ddd}

	Let $ f:(M,g)\to (N,h) $ be a smooth map between Riemannian manifold. Consider the following symphonic energy of a  smooth map
	$$
	\begin{aligned}
		E_{sym}(f)=\int_M \frac{\|f^* h \|^2}{2} dv_g.
	\end{aligned}
	$$
	
	Let $ \{e_i\}_{i=1}^{m} $ be the local orthonormal frame.    
	Its Euler-Lagrange equation of $E_{sym}(f)$ is as follows (cf. \cite{MR2781758,MR2852338}) 
	\begin{equation}\label{cg}
		\begin{aligned}
			\operatorname{div}_g \sigma_f=0.
		\end{aligned}
	\end{equation}
	where $\sigma_f(X)=\sum_j h\left(d f(X), d f\left(e_j\right)\right) d f\left(e_j\right) $.
	
	In 2011, Nakauchi and Takenaka \cite{MR2852338} defined symphonic map, which is defined as the critical point of the above funtional. One can refer to  \cite{MR3940323,MR3813997,MR3451407,MR4483578, MR4434687,MR3989749,MR4484437} for the related results for symphonic map.

%

		\begin{thm}[cf.\cite{MR4756031}]
		Let $f:\left(M^{m \geq 2}, g\right) \rightarrow(N, h)$ be a harmonic map between Riemannian manifolds. Assume that $M$ is closed and that there exists $K>0$ such that
		
		$$
		\operatorname{Ric}_g \geq(m-1) K f^* h \quad \text { and } \quad \sec _h \leq K
		$$

		Then $f$ is a totally geodesic map. In particular, either
		i) $f$ is constant, or
		ii) $f$ is a homothetic immersion, $g$ has positive constant curvature and  on $M$ and $f(M)$, respectively
		
		$$
		\operatorname{Ric}_g=(m-1) K f^* h \quad \text { and } \quad \sec _h(\Pi)=K
		$$		
		for any 2-plane $\Pi$ contained in $\mathrm{d} f(T M) \subseteq T N$.

	\end{thm}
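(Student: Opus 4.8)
The plan is to run the classical Eells–Sampson Bochner argument on the closed manifold $M$. Since $f$ is harmonic its tension field vanishes, and the Bochner formula for the energy density reads
$$\frac{1}{2}\Delta |df|^2 = |\nabla df|^2 + \sum_{i}\langle df(\operatorname{Ric}_g(e_i)), df(e_i)\rangle - \sum_{i,j}R^N\bigl(df(e_i),df(e_j),df(e_j),df(e_i)\bigr),$$
where $\{e_i\}$ is a local orthonormal frame and $\operatorname{Ric}_g$ is the Ricci endomorphism. Because $M$ is closed, integrating against $dv_g$ kills the left-hand side, so
$$0 = \int_M |\nabla df|^2\,dv_g + \int_M\bigl(\mathcal R - \mathcal K\bigr)\,dv_g,$$
where $\mathcal R$ and $\mathcal K$ denote the Ricci and target-curvature contributions respectively. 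The whole result follows once I show that the pointwise integrand $\mathcal R - \mathcal K$ is nonnegative, and then read off the equality case.

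For the pointwise estimate, set $v_i = df(e_i)$ and let $G_{ij}=h(v_i,v_j)=(f^*h)(e_i,e_j)$, a positive semidefinite symmetric matrix. The Ricci term is the Frobenius pairing $\mathcal R = \langle \operatorname{Ric}_g, f^*h\rangle$. Using the hypothesis $\operatorname{Ric}_g - (m-1)K f^*h \geq 0$ together with the fact that the trace of a product of two positive semidefinite matrices is nonnegative, I get $\mathcal R \geq (m-1)K\sum_{i,j}G_{ij}^2$. For the curvature term, Cauchy–Schwarz gives $|v_i|^2|v_j|^2 - \langle v_i,v_j\rangle^2 \geq 0$, so $\sec_h \leq K$ yields
$$\mathcal K \leq K\sum_{i,j}\bigl(|v_i|^2|v_j|^2 - \langle v_i,v_j\rangle^2\bigr) = K\bigl((\operatorname{tr}G)^2 - \textstyle\sum_{i,j}G_{ij}^2\bigr).$$
Combining the two bounds,
$$\mathcal R - \mathcal K \geq K\Bigl(m\sum_{i,j}G_{ij}^2 - (\operatorname{tr}G)^2\Bigr) \geq 0,$$
where the final inequality is the elementary estimate $(\operatorname{tr}G)^2 \leq m\sum_i G_{ii}^2 \leq m\sum_{i,j}G_{ij}^2$. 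Feeding this back into the integrated formula forces $\nabla df \equiv 0$ — so $f$ is totally geodesic — and forces every one of the above inequalities to be an equality pointwise.

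It remains to extract the dichotomy from the equalities. If $df\equiv 0$ then $f$ is constant, which is case i). Otherwise $|df|^2$ is a positive constant (it is constant since $\nabla df = 0$). Equality in the chain $(\operatorname{tr}G)^2 \leq m\sum_i G_{ii}^2 \leq m\sum_{i,j}G_{ij}^2$ forces all diagonal entries of $G$ equal and all off-diagonal entries zero, i.e. $f^*h = \lambda g$ for a positive constant $\lambda$; thus $f$ is a homothetic immersion. Equality in the Ricci estimate then says the positive semidefinite tensor $\operatorname{Ric}_g - (m-1)Kf^*h$ has vanishing trace against $f^*h=\lambda g$, hence vanishes identically, giving $\operatorname{Ric}_g = (m-1)Kf^*h$. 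Finally, equality in the sectional-curvature bound gives $\sec_h(\Pi)=K$ for every $2$-plane $\Pi$ spanned by two of the (now orthogonal, equal-length) vectors $v_i$; since the frame $\{e_i\}$ is arbitrary this upgrades to $\sec_h(\Pi)=K$ for all $2$-planes $\Pi\subseteq df(TM)$.

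The last point — and the one I expect to be the real subtlety — is that these conditions force $g$ to have positive \emph{constant} sectional curvature, not merely to be Einstein. Here I would invoke the Gauss equation: since $f$ is a totally geodesic homothetic immersion its second fundamental form vanishes, so the intrinsic sectional curvature of $(M, f^*h)$ equals $\sec_h$ on tangent $2$-planes, which we have just shown equals $K$. Hence $(M, f^*h)$ has constant curvature $K$, and rescaling by $g = \lambda^{-1}f^*h$ gives $(M,g)$ constant curvature $\lambda K > 0$, consistent with $\operatorname{Ric}_g=(m-1)Kf^*h$. Care is needed to justify the frame-independence of the equality conditions and the Gauss-equation step for a homothetic rather than isometric immersion, which is where the bookkeeping is most delicate.
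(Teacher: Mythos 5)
The paper never proves this statement: it is quoted, with the citation \cite{MR4756031}, purely as background motivating the symphonic-map analogue, and the only argument the paper supplies is for its own Theorem~\ref{thm1}. So your proposal can only be measured against the paper's general method, and there it matches closely: your argument is the classical Eells--Sampson Bochner argument, which is exactly the template the paper follows for the symphonic case --- the Bochner identity for $\|f^*h\|^2$ (the paper's Lemma 2) replaces your identity for $|df|^2$, the positive-semidefinite trace inequality (the paper's Lemma~\ref{lem1}) plays the role of your Frobenius-pairing estimate $\langle \operatorname{Ric}_g-(m-1)Kf^*h,\,f^*h\rangle\geq 0$, and the divergence theorem on the closed manifold kills the Laplacian term exactly as in your integration step. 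Your pointwise estimate $\mathcal R-\mathcal K\geq K\bigl(m\sum_{i,j}G_{ij}^2-(\operatorname{tr}G)^2\bigr)\geq 0$ and the extraction of the equality case are correct.

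The two points you flagged as delicate are both fine, and can be closed off quickly. First, the frame-dependence worry disappears if you replace the chain $(\operatorname{tr}G)^2\leq m\sum_i G_{ii}^2\leq m\sum_{i,j}G_{ij}^2$ by the single Cauchy--Schwarz inequality $(\operatorname{tr}G)^2=\langle G,\mathrm{id}\rangle^2\leq m\|G\|^2$, whose equality case directly gives $G=\lambda\,\mathrm{id}$ pointwise; $\lambda$ is then constant because $\operatorname{tr}G=|df|^2$ is constant once $\nabla df=0$, and equality in the termwise sectional-curvature bound follows because the sum of the termwise deficits must vanish. Second, the Gauss-equation step for a homothetic (rather than isometric) immersion is legitimate because the Levi--Civita connection is unchanged under the constant rescaling $f^*h=\lambda g$: hence $\nabla df=0$ says precisely that $f\colon (M,f^*h)\to(N,h)$ is a totally geodesic isometric immersion, the Gauss equation gives $\sec_{f^*h}=K$ on all tangent $2$-planes, and rescaling yields $\sec_g=\lambda K>0$. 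With these two observations your proof is complete and, as far as one can compare, follows the same Bochner-plus-trace-inequality route that the paper itself uses for its main theorem.
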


	About symphonic map, Nakauchi  has obtained the similar theorems:

	\begin{thm}[Compact case,\cite{MR3940323}]
		Let $(M,g)$ be a compact connected Riemannian manifold without boundary, and let $(N,h)$ be a Riemannian manifold. Suppose that the Ricci curvature of $M$ is non-negative, and that the sectional curvature of $N$ is non-positive. Let $f$ be any symphonic map of class $\mathrm{C}^3$ from $M$ into $N$.

		Then
		
		(a) The pullback $f^* h$ is parallel, i.e., $\nabla\left(f^* h\right)=0$.
		
		(b) If the Ricci curvature of $M$ is positive at a point, then $f$ is a constant map.
		(c) If $N$ has negative sectional curvature everywhere, then either $f$ is a constant map or $d f(x)$ has at most rank one for all $x \in M$.
	\end{thm}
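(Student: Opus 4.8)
\emph{Set-up.} The plan is to run a Bochner--Weitzenb\"ock argument on the pulled-back symmetric $2$-tensor $\phi:=f^{*}h$, where $\phi(X,Y)=h(df(X),df(Y))$. Let $S$ be the $g$-symmetric, positive semidefinite endomorphism of $TM$ determined by $g(SX,Y)=\phi(X,Y)$; then $\phi_{ij}:=\phi(e_i,e_j)=g(Se_i,e_j)$, $\|\phi\|^{2}=\operatorname{tr}(S^{2})$, and the defining field of the Euler--Lagrange equation can be written $\sigma_{f}(X)=df(SX)$. Write $B(X,Y)=\nabla_{X}^{f}df(Y)-df(\nabla_{X}Y)$ for the second fundamental form of $f$ and $\tau(f)=\operatorname{tr}_{g}B$ for its tension field. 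I would compute $\tfrac12\Delta\|\phi\|^{2}$ pointwise, integrate over the closed manifold $M$, and read off (a)--(c) from the resulting vanishing statements.

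\emph{The Weitzenb\"ock identity.} Starting from the standard Bochner identity $\tfrac12\Delta\|\phi\|^{2}=\|\nabla\phi\|^{2}+\langle\phi,\Delta^{\mathrm{rough}}\phi\rangle$, with $\Delta^{\mathrm{rough}}=\sum_{k}(\nabla_{e_{k}}\nabla_{e_{k}}-\nabla_{\nabla_{e_{k}}e_{k}})$, I would expand $\nabla\phi$ through $B$ and differentiate once more. The crucial input is the Codazzi identity
\[
(\nabla_{X}B)(Y,Z)-(\nabla_{Y}B)(X,Z)=R^{N}(df X,df Y)df Z-df(R^{M}(X,Y)Z),
\]
which after tracing gives $\sum_{k}(\nabla_{e_{k}}B)(e_{k},e_{i})=\nabla_{e_{i}}\tau(f)+\sum_{k}R^{N}(df e_{k},df e_{i})df e_{k}+df(\operatorname{Ric}^{M}e_{i})$, the sign conventions being fixed so that the Ricci endomorphism enters as written. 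Substituting into $\langle\phi,\Delta^{\mathrm{rough}}\phi\rangle$ produces exactly four kinds of terms: a tension-field term $2\sum_{i}h(\nabla_{e_{i}}\tau(f),\sigma_{f}(e_{i}))$, an $M$-curvature term $2\operatorname{tr}(\operatorname{Ric}^{M}S^{2})$, an $N$-curvature term $2\mathcal{K}_{N}:=2\sum_{i,j,k}\phi_{ij}\,h(R^{N}(df e_{k},df e_{i})df e_{k},df e_{j})$, and a quadratic term $2\sum_{i,j,k}\phi_{ij}\,h(B(e_{k},e_{i}),B(e_{k},e_{j}))$.

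\emph{Integration and sign analysis.} Integrating over $M$, the left side integrates to zero, so $0=\int_{M}\|\nabla\phi\|^{2}+\int_{M}\langle\phi,\Delta^{\mathrm{rough}}\phi\rangle$. The step I expect to be the main obstacle is the tension-field term: unlike the harmonic case $\tau(f)$ does \emph{not} vanish, yet integration by parts gives $\int_{M}2\sum_{i}h(\nabla_{e_{i}}\tau(f),\sigma_{f}(e_{i}))=-2\int_{M}h(\tau(f),\operatorname{div}_{g}\sigma_{f})=0$ precisely by the symphonic equation \eqref{cg}; recognizing that this non-vanishing term is annihilated is the heart of the argument. For the remaining three integrands I would verify pointwise non-negativity: $\operatorname{tr}(\operatorname{Ric}^{M}S^{2})\ge0$ since $\operatorname{Ric}^{M}\succeq0$ and $S^{2}\succeq0$ (the trace of a product of positive semidefinite symmetric operators is non-negative); the quadratic term equals $\sum_{k}\operatorname{tr}(G\,H_{k})\ge0$, where $G=(\phi_{ij})\succeq0$ and $H_{k}=\big(h(B(e_{k},e_{i}),B(e_{k},e_{j}))\big)\succeq0$ are Gram matrices; and $\mathcal{K}_{N}=\sum_{k}\operatorname{tr}(G\,H_{k}')\ge0$, where $H_{k}'=\big(h(R^{N}(df e_{k},df e_{i})df e_{k},df e_{j})\big)$ is the matrix of the Jacobi operator $X\mapsto R^{N}(df e_{k},X)df e_{k}$, which is positive semidefinite because $\sec_{h}\le0$. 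Hence every integrand vanishes identically on $M$.

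\emph{Conclusions.} Vanishing of $\|\nabla\phi\|^{2}$ yields $\nabla(f^{*}h)=0$, proving (a). For (b), if $\operatorname{Ric}^{M}\succ0$ at a point $x_{0}$, then $\operatorname{tr}(\operatorname{Ric}^{M}S^{2})=0$ forces $S(x_{0})=0$, i.e.\ $\phi(x_{0})=0$; since $\|\phi\|^{2}$ is constant by (a) and $M$ is connected, $\phi\equiv0$, so $df\equiv0$ and $f$ is constant. For (c), when $\sec_{h}<0$ everywhere the vanishing of $\mathcal{K}_{N}$ forces each trace $\operatorname{tr}(G\,H_{k}')$ to vanish; by strict negativity of the Jacobi operators this is possible only when $df(e_{k})$ and $df(e_{i})$ are linearly dependent for all $i,k$, which means $df(x)$ has rank at most one at every $x$ (the excluded degenerate case $df\equiv0$ being the constant map). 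The essential difficulties are thus the correct derivation of the Weitzenb\"ock identity above and, above all, the observation that the symphonic equation $\operatorname{div}_{g}\sigma_{f}=0$ removes the tension-field contribution after integration.
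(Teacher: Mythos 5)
Your proposal is correct and follows essentially the same route as the paper: the Bochner-type formula of Lemma 2 (your Weitzenb\"ock identity, with your term $2\sum_i h(\nabla_{e_i}\tau(f),\sigma_f(e_i))$ being the paper's $\operatorname{div}_g\alpha_f - h(\tau_f,\operatorname{div}_g\sigma_f)$ before splitting off the divergence), integration over the closed manifold so that the symphonic equation $\operatorname{div}_g\sigma_f=0$ removes the tension-field contribution, and the positive-semidefinite trace arguments of Lemma 1 to force each remaining integrand to vanish. The only difference is cosmetic: the paper's formula packages the tension term so that it vanishes pointwise for symphonic maps, whereas you kill it by integration by parts.
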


	The main purpose of this paper   are to generalize this  theorem  to weak curvature condition.
	
		\begin{thm}\label{thm1} Let $(M^m,g)$ be a compact connected Riemannian manifold without boundary, and let $(N,h)$ be a Riemannian manifold with curvature tensor 
		$$R^N(X,Y,Z,W)=a(h(X,Z)h(Y,W))-h(X,W)h(Y,Z).$$ Let $f$ be any symphonic map of class $\mathrm{C}^3$ from $M$ into $N$. Suppose that 
		$$
		\operatorname{Ric}_g \geq(m-1) K f^* h \quad \text { and } \quad a \leq K.
		$$

		Then the pullback $f^* h$ is parallel, i.e., $\nabla\left(f^* h\right)=0$. Moreover,  $f$ is constant map.

%
		
		
	\end{thm}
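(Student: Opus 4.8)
The plan is to run a Bochner--Weitzenböck argument for the scalar function $\|f^*h\|^2$. Fix a local orthonormal frame $\{e_i\}$, write $\beta_{ij}=h(df(e_i),df(e_j))$ for the components of $\beta=f^*h$, and let $B_{ij}=(\nabla df)(e_i,e_j)$ denote the second fundamental form, with tension field $\tau(f)=\sum_k B_{kk}$. Differentiating $\|f^*h\|^2=\sum_{i,j}\beta_{ij}^2$ twice and tracing, I would first establish
\begin{align*}
\frac{1}{2}\Delta\|f^*h\|^2 &= \|\nabla(f^*h)\|^2 + 2\sum_{i,j,k}\beta_{ij}\,h(B_{ki},B_{kj}) \\
&\quad + 2\sum_{i,j,k}\beta_{ij}\,h(\nabla_{e_k}B_{ki},df(e_j)).
\end{align*}
Choosing at each point a frame that diagonalizes $\beta$, with eigenvalues $\lambda_i\ge 0$, the first term is nonnegative and the second equals $2\sum_{i,k}\lambda_i|B_{ki}|^2\ge 0$; so the entire difficulty is concentrated in the last term.

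Next I would insert the standard commutation (Weitzenböck) identity for the second fundamental form,
\[
\sum_k\nabla_{e_k}B_{ki}=\nabla_{e_i}\tau(f)+df(\operatorname{Ric}^M(e_i))-\sum_k R^N(df(e_k),df(e_i))\,df(e_k).
\]
The term $\nabla_{e_i}\tau(f)$ is precisely where the symphonic (as opposed to harmonic) character must be exploited. Since $M$ is closed, I would integrate $\tfrac12\Delta\|f^*h\|^2$ over $M$, so that the left-hand side vanishes, and then integrate the $\nabla_{e_i}\tau(f)$ contribution by parts. Writing $h(\nabla_{e_i}\tau,df(e_j))=e_i[h(\tau,df(e_j))]-h(\tau,B_{ij})$ and substituting the Euler--Lagrange equation $\operatorname{div}_g\sigma_f=0$ in the form $\sum_{i,j}(\nabla_{e_i}\beta_{ij})df(e_j)=-\sum_{i,j}\beta_{ij}B_{ij}$, the two resulting contributions are $\pm\int_M\sum_{i,j}\beta_{ij}h(\tau,B_{ij})$ and cancel. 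Hence this term disappears after integration, and only the genuine curvature terms survive.

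With $N$ of constant curvature $a$, I would substitute $R^N(X,Y,Z,W)=a(h(X,Z)h(Y,W)-h(X,W)h(Y,Z))$. The $R^N$-contribution then becomes $-2a(\|f^*h\|^2\operatorname{tr}(f^*h)-\operatorname{tr}((f^*h)^3))$, while the Ricci contribution is $2\sum_i\operatorname{Ric}(e_i,e_i)\lambda_i^2$ in the diagonalizing frame. Invoking $\operatorname{Ric}_g\ge(m-1)Kf^*h$ (so $\operatorname{Ric}(e_i,e_i)\ge(m-1)K\lambda_i$) together with $a\le K$ and $\lambda_i\ge 0$, these two contributions combine and are bounded below by
\[
2K\Big(m\sum_i\lambda_i^3-\big(\textstyle\sum_i\lambda_i\big)\big(\textstyle\sum_i\lambda_i^2\big)\Big),
\]
which is nonnegative by Chebyshev's sum inequality applied to the similarly ordered sequences $(\lambda_i)$ and $(\lambda_i^2)$. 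Thus, after integration, $\tfrac12\Delta\|f^*h\|^2$ has been written as a sum of manifestly nonnegative terms integrating to zero.

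Finally, each term must vanish identically. In particular $\|\nabla(f^*h)\|^2\equiv 0$, giving the asserted parallelism $\nabla(f^*h)=0$; since $M$ is connected, the eigenvalues $\lambda_i$ are then constant. The vanishing of the remaining terms forces equality in the Chebyshev bound (all $\lambda_i$ equal) and $\sum_{i,k}\lambda_i|B_{ki}|^2=0$. To conclude that $f$ is constant one must then show the common eigenvalue is zero. I expect this to be the main obstacle: the inequalities above are saturated exactly at a homothetic, totally geodesic configuration, so the delicate point is to exclude this borderline case and force $f^*h\equiv 0$, which is where the full strength of the hypotheses $\operatorname{Ric}_g\ge(m-1)Kf^*h$ and $a\le K$ (and $K>0$) must be brought to bear.
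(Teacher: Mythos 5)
Your computation is, up to normalization, the same Bochner argument as the paper's: your raw identity for $\tfrac12\Delta\|f^*h\|^2$, the commutation formula, and the integration-by-parts cancellation of the $\nabla_{e_i}\tau$ term via $\operatorname{div}_g\sigma_f=0$ are exactly the content of the paper's Lemma 2 (the terms $\operatorname{div}_g\alpha_f-h(\tau_f,\operatorname{div}_g\sigma_f)$, the latter killed by the symphonic equation), and your pointwise diagonalization of $f^*h$ with $R_{ii}\ge(m-1)K\lambda_i$ multiplied by $\lambda_i^2$ is the frame-wise version of the paper's Lemma 1 trace inequality. Up to and including the conclusion $\nabla(f^*h)=0$ your argument is correct, with one caveat that applies equally to the paper: both proofs silently need $K\ge 0$, since otherwise neither $(m-1)K-a\ge 0$ nor the nonnegativity of your Chebyshev bound $2K\bigl(m\sum_i\lambda_i^3-(\sum_i\lambda_i)(\sum_i\lambda_i^2)\bigr)$ follows from $a\le K$ (for instance the identity map of a compact hyperbolic manifold satisfies the hypotheses with $K=a=-1$).

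The obstacle you flag at the end -- that you cannot force the common eigenvalue to vanish -- is a genuine gap in the sense that your proposal does not prove the full statement, but it is not a defect of your reasoning: the constancy assertion is false as stated. The identity map of the round sphere of curvature $1$ is symphonic (one checks $\operatorname{div}_g\sigma_{\mathrm{id}}=0$ directly; more generally any totally geodesic isometric immersion is symphonic), satisfies $\operatorname{Ric}_g=(m-1)f^*h$ and $a=1\le K=1$, and is not constant; it realizes exactly the homothetic borderline case you describe. The paper reaches constancy only through two algebra slips. First, its substitution $\sum_k h\bigl(R^N(df(e_i),df(e_k))df(e_k),df(e_j)\bigr)=a(b_{ik}b_{kj}-b_{ij}b_{kk})$ has the wrong sign: under the convention that makes Lemma 2 yield Nakauchi's theorem (curvature term nonnegative when $\sec^N\le 0$), the correct expansion is $a(b_{ij}b_{kk}-b_{ik}b_{kj})$. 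Second, the last term of its final decomposition is written as $b_{ij}^2b_{kk}$ where the algebra (even with the paper's own sign) produces $a\,b_{ij}^2b_{kk}$, which is only useful when $a>0$. Correcting the sign, the paper's lower bound $((m-1)K-a)\operatorname{tr}((f^*h)^3)+a\,\|f^*h\|^2\operatorname{tr}(f^*h)$ becomes $((m-1)K+a)\operatorname{tr}((f^*h)^3)-a\,\|f^*h\|^2\operatorname{tr}(f^*h)\ \ge\ K\bigl(m\operatorname{tr}((f^*h)^3)-\operatorname{tr}(f^*h)\|f^*h\|^2\bigr)$, which is precisely your Chebyshev bound and vanishes identically on homothetic maps. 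So the strongest conclusion obtainable along these lines is the one you actually reach: $f^*h$ parallel with equal eigenvalues and vanishing of the weighted second-fundamental-form terms, i.e.\ the Eells--Sampson-type dichotomy (constant, or a homothetic totally geodesic configuration) of the theorem quoted in the introduction -- not outright constancy.
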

	
	\begin{rem}
		In \cite{MR4483578},  The author  got the similar theorem, but the conditions is different here, although we both weaken the upper bound of sectional curvature of the target manifold. 
	\end{rem}
	
	Using the argument in the proof of Theorem 3, we immediately  get
	
	\begin{thm}[Noncompact case ]
	Let $M^m$ be a complete connected non-compact Riemannian manifold  and let $N$ be a Riemannian manifold with curvature tensor 
	$$R^N(X,Y,Z,W)=a(h(X,Z)h(Y,W))-h(X,W)h(Y,Z).$$ Suppose that 
	$$
	\operatorname{Ric}_g \geq(m-1) K f^* h \quad \text { and } \quad a \leq K
	$$ 
	If  $E_{\text {sym }}(f)<\infty$, then $f$ is a constant map.
\end{thm}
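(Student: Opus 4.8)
The plan is to rerun the integral Bochner argument behind \autoref{thm1}, replacing the global integration by parts available on a closed manifold by a localized version whose error is controlled by the finite symphonic energy. Recall that the proof of \autoref{thm1} produces, for a $C^3$ symphonic map under the hypotheses $\operatorname{Ric}_g\geq(m-1)Kf^*h$ and $a\leq K$, a pointwise Bochner inequality of the form
\begin{equation*}
\tfrac12\Delta|f^*h|^2\geq|\nabla(f^*h)|^2+Q,
\end{equation*}
where, writing $T=f^*h$ for the (positive semidefinite) pullback tensor, the curvature term $Q\geq 0$ is nonnegative precisely because the Ricci lower bound in terms of $f^*h$ compensates the bound $a\leq K$ through the Chebyshev-type estimate $m\operatorname{tr}(T^3)\geq(\operatorname{tr}T)(\operatorname{tr}T^2)$. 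In particular $|f^*h|^2$ is subharmonic, and equality $Q=0$ forces all eigenvalues of $T$ to coincide at the given point.

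First I would fix $o\in M$ and, using completeness, choose a standard cutoff $\eta=\eta_R\in C_c^\infty(M)$ with $\eta\equiv1$ on the geodesic ball $B_R(o)$, $\operatorname{supp}\eta\subset B_{2R}(o)$, and $|\nabla\eta|\leq C/R$. Multiplying the Bochner inequality by $\eta^2$, integrating over $M$, and integrating by parts gives
\begin{equation*}
\int_M\eta^2\bigl(|\nabla(f^*h)|^2+Q\bigr)\,dv_g\leq\int_M\eta^2\,\tfrac12\Delta|f^*h|^2\,dv_g=-\int_M\eta\,\langle\nabla\eta,\nabla|f^*h|^2\rangle\,dv_g.
\end{equation*}
Using $\bigl|\nabla|f^*h|^2\bigr|\leq 2|f^*h|\,|\nabla(f^*h)|$ together with the Cauchy--Schwarz and Young inequalities, I would absorb a term $\tfrac12\int_M\eta^2|\nabla(f^*h)|^2\,dv_g$ into the left-hand side and be left with
\begin{equation*}
\int_M\eta^2\bigl(\tfrac12|\nabla(f^*h)|^2+Q\bigr)\,dv_g\leq C\int_M|\nabla\eta|^2|f^*h|^2\,dv_g\leq\frac{C}{R^2}\int_{B_{2R}(o)\setminus B_R(o)}|f^*h|^2\,dv_g\leq\frac{2C}{R^2}\,E_{\mathrm{sym}}(f).
\end{equation*}

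Since $E_{\mathrm{sym}}(f)<\infty$, the right-hand side tends to $0$ as $R\to\infty$; letting $R\to\infty$ and applying Fatou's lemma to the nonnegative integrand yields $\int_M\bigl(\tfrac12|\nabla(f^*h)|^2+Q\bigr)\,dv_g=0$, so that $\nabla(f^*h)\equiv0$ and $Q\equiv0$. Hence $f^*h$ is parallel. To upgrade this to constancy, I would invoke the equality case noted above: $Q\equiv0$ forces all eigenvalues of $T=f^*h$ to agree, so $f^*h=\mu\,g$ for some $\mu\geq0$, which is constant by parallelism. If $\mu>0$ then $\operatorname{Ric}_g\geq(m-1)K\mu\,g>0$, and Bonnet--Myers forces $M$ to be compact, contradicting the noncompactness of $M$; therefore $\mu=0$, i.e.\ $f^*h\equiv0$, whence $df\equiv0$ and, $M$ being connected, $f$ is constant.

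The hard part is the passage from the closed-manifold identity $\int_M\Delta|f^*h|^2\,dv_g=0$ to its localized surrogate: everything hinges on dominating the boundary term produced by integration by parts using only the finite symphonic energy, which is exactly why the cutoff must be coupled with Young's inequality so that the gradient error is absorbed and only the $L^2$-mass of $f^*h$ on the annulus $B_{2R}\setminus B_R$ survives, decaying like $R^{-2}$. A secondary, more conceptual obstacle is the final step: unlike the harmonic setting, where a homothetic totally geodesic immersion can persist, here the noncompactness of $M$ excludes the homothety alternative $\mu>0$ via Bonnet--Myers, leaving $f$ constant as the only possibility.
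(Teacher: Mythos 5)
Your overall strategy---localize the Bochner argument with cutoffs $\eta_R$, absorb the boundary term via Young's inequality, and let $R\to\infty$ using $E_{\mathrm{sym}}(f)<\infty$---is the right one, and it is essentially what the paper invokes by deferring to Nakauchi's noncompact argument. However, your starting point is not the inequality that the Bochner formula actually provides, and the discrepancy sits exactly where the difficulty of the noncompact case lies. The inequality the paper extracts from the proof of Theorem \ref{thm1} is
\begin{equation*}
\frac14\Delta\left\|f^*h\right\|^2\;\geq\;\operatorname{div}_g\alpha_f+\frac12\left\|\nabla\left(f^*h\right)\right\|^2\;(+\;\text{nonnegative terms}),
\end{equation*}
where $\alpha_f$ is the $1$-form $\alpha_f(X)=h(\tau_f,\sigma_f(X))$ built from the tension field. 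The term $\operatorname{div}_g\alpha_f$ involves derivatives of $\tau_f$ (third derivatives of $f$), has no sign, and cannot be discarded pointwise; for a symphonic map only the companion term $-h(\tau_f,\operatorname{div}_g\sigma_f)$ vanishes, not $\operatorname{div}_g\alpha_f$ itself. Consequently your claim that $\|f^*h\|^2$ is subharmonic is unjustified, and your integration by parts is missing the additional boundary term $-2\int_M\eta\,\langle\nabla\eta,\alpha_f\rangle\,dv_g$. Dropping this divergence silently assumes away the one term that makes the noncompact extension nontrivial.

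The gap is repairable with exactly your technique, but it needs one more pointwise estimate: since $\nabla df$ is symmetric,
\begin{equation*}
h\left(\tau_f,df(e_j)\right)=\sum_k\left(\nabla_{e_k}f^*h\right)(e_k,e_j)-\frac12\sum_k\left(\nabla_{e_j}f^*h\right)(e_k,e_k),
\end{equation*}
so $|\alpha_f|\leq C_m\,\|f^*h\|\,\|\nabla(f^*h)\|$, and the extra boundary term can then be absorbed by Young's inequality exactly like the one you did treat; with that supplement your cutoff computation closes. Two further points on your endgame. The claim that $Q\equiv0$ forces the eigenvalues of $f^*h$ to coincide is not established anywhere (the paper's decomposition uses the trace inequality of Lemma \ref{lem1}, not a Chebyshev pinching), and your appeal to Bonnet--Myers requires $K>0$, which is not among the hypotheses. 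A route closer to the paper's compact proof avoids both: after the cutoff argument one gets the vanishing of \emph{all} the nonnegative terms, including the last summand of the curvature computation, which is $\|f^*h\|^2\|df\|^2$ up to the factor $a$; combined with $\nabla(f^*h)=0$ (so $\|f^*h\|$ is constant) this yields $df\equiv0$ directly, or, when $K\geq0$ so that $\operatorname{Ric}_g\geq0$, one can instead note that $M$ has infinite volume and a nonzero constant $\|f^*h\|$ would contradict $E_{\mathrm{sym}}(f)<\infty$.
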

\begin{proof}
	From the proof of Theorem \ref{thm1}, we know 
	$$\begin{aligned}
		&\frac{1}{4} \Delta\left\|f^* h\right\|^2 \geq \operatorname{div}_g \alpha_f+\frac{1}{2}\left\|\nabla\left(f^* h\right)\right\|^2 .
	\end{aligned}$$
	the rest proof proceeds as in \cite{MR3940323}.
\end{proof}
	
	Acknowledgement: This work is Supported by General Project of Natural Science (Basic Science) Research of Colleges and Universities in Jiangsu Province (No. 22KJD110004)
	
	In this paper, we use notation: $R^N(X,Y,Z,W)=\langle R(X,Y)W, Z\rangle$
	
	\section{the proof of main theorem }

	\begin{lem}\label{lem1}
		(1) If symmetric matrices $A=\left(a_{i j}\right)$ and $B=\left(b_{i j}\right)$ are positive semi-definite,		
	then we have
		
		$$
		\operatorname{tr}\left(A^t B\right)=\sum_{i, j=1}^m a_{i j} b_{i j} \geq 0
		$$
		
		(2) If $A$ is positive definite and $B$ is positive semi-definite, then $\operatorname{tr}\left(A^t B\right)=0$ implies $B=0$.
	\end{lem}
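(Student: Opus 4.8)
The plan is to reduce both parts to the spectral decomposition of $B$. First I would record the identity $\operatorname{tr}(A^t B) = \sum_{i,j} a_{ij} b_{ij}$, which follows directly from $\operatorname{tr}(A^t B) = \sum_i \sum_j (A^t)_{ij} B_{ji} = \sum_{i,j} a_{ji} b_{ji}$ together with the symmetries $a_{ji}=a_{ij}$ and $b_{ji}=b_{ij}$; this matches the two expressions in the statement and lets me work with $\operatorname{tr}(AB)$ throughout.

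For part (1), since $B$ is symmetric and positive semi-definite I would diagonalize it as $B = \sum_{k=1}^m \lambda_k\, v_k v_k^t$ with $\{v_k\}$ an orthonormal eigenbasis and eigenvalues $\lambda_k \geq 0$. Using linearity and the cyclic property of the trace, $\operatorname{tr}(AB) = \sum_k \lambda_k \operatorname{tr}(A v_k v_k^t) = \sum_k \lambda_k\, v_k^t A v_k$, where each term is a product of a nonnegative scalar $\lambda_k$ (because $B$ is PSD) and the nonnegative quadratic form $v_k^t A v_k$ (because $A$ is PSD). Summing yields $\operatorname{tr}(A^t B) = \operatorname{tr}(AB) \geq 0$, as claimed.

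For part (2), I keep the same decomposition and invoke the hypothesis $\operatorname{tr}(AB) = 0$, i.e. $\sum_k \lambda_k\, v_k^t A v_k = 0$. Since every summand is nonnegative, each must vanish, so $\lambda_k\, v_k^t A v_k = 0$ for all $k$. Here the extra strength of the hypothesis enters: because $A$ is positive \emph{definite} and each eigenvector satisfies $v_k \neq 0$, one has the strict inequality $v_k^t A v_k > 0$, which forces $\lambda_k = 0$ for every $k$. Consequently $B = \sum_k \lambda_k\, v_k v_k^t = 0$.

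There is no genuine obstacle here, as this is a standard fact about the trace (Frobenius) pairing of positive matrices; the only points demanding care are the passage from $A^t$ to $A$ via symmetry, so that the trace formula matches the entrywise sum, and, in part (2), the observation that positive definiteness upgrades ``each product vanishes'' to ``each eigenvalue vanishes'' via $v_k^t A v_k > 0$. An alternative to diagonalizing $B$ would be to write $A = P^t P$ and note $\operatorname{tr}(AB) = \operatorname{tr}(P B P^t) \geq 0$ since $P B P^t$ is PSD, but the spectral approach is the more convenient one for handling the equality case in (2).
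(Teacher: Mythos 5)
Your proof is correct. Note, however, that the paper itself supplies no proof of this lemma at all: it is stated as a standard linear-algebra fact (it is the same auxiliary lemma used in Nakauchi's paper on symphonic maps, which the author cites for the Bochner formula) and the author proceeds directly to the next lemma. So there is no proof in the paper to compare against; your argument fills that gap. Your spectral-decomposition route is the standard one and handles both parts cleanly: writing $B=\sum_k \lambda_k v_k v_k^t$ with $\lambda_k\geq 0$ reduces the trace pairing to $\sum_k \lambda_k\, v_k^t A v_k$, which gives nonnegativity in (1) and, via strict positivity of $v_k^t A v_k$ when $A$ is positive definite, forces every $\lambda_k=0$ in (2). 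One tiny remark: the identity $\operatorname{tr}(A^t B)=\sum_{i,j} a_{ij} b_{ij}$ holds for arbitrary (not necessarily symmetric) matrices, since it is just the Frobenius inner product; symmetry is only needed to replace $A^t$ by $A$, i.e.\ to pass to $\operatorname{tr}(AB)$, which you do use in the spectral computation. The alternative factorization $A=P^tP$, $\operatorname{tr}(AB)=\operatorname{tr}(PBP^t)\geq 0$ that you mention is equally valid for (1), but as you say it is less convenient for extracting the equality case in (2), so your choice of the spectral approach is the right one.
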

	
	\begin{lem}[Formula of Bochner type, cf.\cite{MR3940323}]
	 For any map $f$ of class $\mathrm{C}^3$ from $M$ into $N$, the following equality holds:
		
		$$
		\begin{aligned}
			& \frac{1}{4} \Delta\left\|f^* h\right\|^2 \\
			& =\operatorname{div}_g \alpha_f-h\left(\tau_f, \operatorname{div}_g \sigma_f\right) . \\
			& \quad+\sum_{i, j, k=1}^m h\left(\left(\nabla_{e_k} d f\right)\left(e_i\right),\left(\nabla_{e_k} d f\right)\left(e_j\right)\right) h\left(d f\left(e_i\right), d f\left(e_j\right)\right) \\
			& \quad+\frac{1}{2}\left\|\nabla\left(f^* h\right)\right\|^2 \\
			& \quad+\sum_{i, j, k=1}^m h\left(d f\left(Ric^M \left(e_i, e_k\right)\left(e_k\right)\right), d f\left(e_j\right)\right) h\left(d f\left(e_i\right), d f\left(e_j\right)\right) \\
			& \quad-\sum_{i, j, k=1}^m h\left( R^N\left(d f\left(e_i\right), d f\left(e_k\right)\right) d f\left(e_k\right), d f\left(e_j\right)\right) h\left(d f\left(e_i\right), d f\left(e_j\right)\right),
		\end{aligned}
		$$		
		where $\left\{e_i\right\}$ denotes a local orthonormal frame around the point of interest in $M$.
	\end{lem}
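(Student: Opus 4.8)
\emph{Plan of proof.} The idea is to regard $T:=f^{*}h$ as a symmetric $2$-tensor on $M$, with components $T_{ij}=h(df(e_i),df(e_j))$ in a local orthonormal frame, and to feed it into the standard Weitzenb\"ock identity for the squared norm of a tensor. Since $\|f^{*}h\|^2=\sum_{i,j}T_{ij}^2=\langle T,T\rangle$, the general Bochner identity for the rough (connection) Laplacian $\Delta_{\mathrm{conn}}=\operatorname{tr}_g\nabla^2$ gives
$$
\frac14\Delta\|f^{*}h\|^2=\frac12\langle \Delta_{\mathrm{conn}}T,T\rangle+\frac12\|\nabla(f^{*}h)\|^2 .
$$
This already isolates the term $\tfrac12\|\nabla(f^{*}h)\|^2$, so everything reduces to expressing $\langle\Delta_{\mathrm{conn}}T,T\rangle$ through $df$. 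First I would differentiate $T_{ij}=h(df(e_i),df(e_j))$ twice at a fixed point $p$ in a normal frame (so $\nabla_{e_k}e_i=0$ at $p$), using the pullback connection on $f^{*}TN$ and the compatibility of $h$. A Leibniz expansion, traced over $k$, yields
$$
(\Delta_{\mathrm{conn}}T)(e_i,e_j)=h\big(\Delta^{f}df(e_i),df(e_j)\big)+h\big(df(e_i),\Delta^{f}df(e_j)\big)+2\sum_k h\big((\nabla_{e_k}df)(e_i),(\nabla_{e_k}df)(e_j)\big),
$$
where $\Delta^{f}df(e_i):=\sum_k(\nabla^2 df)(e_k,e_k,e_i)$ is the rough Laplacian of $df$. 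Contracting with $T_{ij}$ and using the symmetry $T_{ij}=T_{ji}$ to merge the first two summands, the last term contributes exactly the line $\sum_{i,j,k}h((\nabla_{e_k}df)(e_i),(\nabla_{e_k}df)(e_j))\,h(df(e_i),df(e_j))$ of the statement.

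Next I would convert $\Delta^{f}df$ into first-order and curvature data via the commutation (Weitzenb\"ock) formula for the second fundamental form: as $\nabla df$ is symmetric, commuting covariant derivatives gives
$$
\Delta^{f}df(e_i)=\nabla_{e_i}\tau_f+\sum_k R^N(df(e_k),df(e_i))df(e_k)-df\big(\textstyle\sum_k R^M(e_k,e_i)e_k\big),
$$
with $\tau_f=\operatorname{tr}_g\nabla df$ the tension field. Contracted against $h(\cdot,df(e_j))T_{ij}$, the two curvature summands reproduce the Ricci line and the $R^N$ line of the statement, the latter with the displayed sign after using the antisymmetry $R^N(df(e_k),df(e_i))=-R^N(df(e_i),df(e_k))$. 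The surviving piece is $\sum_{i,j}h(\nabla_{e_i}\tau_f,df(e_j))T_{ij}$, which I would recognize as a divergence: setting $\alpha_f(X):=h(\tau_f,\sigma_f(X))$ with $\sigma_f(X)=\sum_j T(X,e_j)df(e_j)$, the Leibniz rule gives $\operatorname{div}_g\alpha_f=\sum_i h(\nabla_{e_i}\tau_f,\sigma_f(e_i))+h(\tau_f,\operatorname{div}_g\sigma_f)$, and since $\sum_i h(\nabla_{e_i}\tau_f,\sigma_f(e_i))=\sum_{i,j}h(\nabla_{e_i}\tau_f,df(e_j))T_{ij}$, this term equals $\operatorname{div}_g\alpha_f-h(\tau_f,\operatorname{div}_g\sigma_f)$, matching the first two lines of the statement. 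Collecting all contributions produces the asserted formula.

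The main obstacle I anticipate is bookkeeping rather than conceptual: one must thread the paper's curvature convention $R^N(X,Y,Z,W)=\langle R(X,Y)W,Z\rangle$ consistently through the Weitzenb\"ock step so that the $R^N$ and Ricci terms land with exactly the right signs, and one must correctly guess the one-form $\alpha_f$ so that the non-curvature first-order terms assemble into an exact divergence. Once $\alpha_f(X)=h(\tau_f,\sigma_f(X))$ is identified, the remainder is the routine normal-frame Leibniz computation sketched above, and the factors of $\tfrac12$ from the Bochner split combine with the factor $2$ in the gradient term to give the coefficient $1$ appearing in the stated curvature and gradient lines.
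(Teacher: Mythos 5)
Your proposal is correct: the split $\tfrac14\Delta\|f^*h\|^2=\tfrac12\langle\Delta_{\mathrm{conn}}(f^*h),f^*h\rangle+\tfrac12\|\nabla(f^*h)\|^2$, the Leibniz expansion of $\Delta_{\mathrm{conn}}(f^*h)$, the Weitzenb\"ock commutation $\Delta^f df(e_i)=\nabla_{e_i}\tau_f+\sum_k R^N(df(e_k),df(e_i))df(e_k)-df\bigl(\sum_k R^M(e_k,e_i)e_k\bigr)$, and the identification $\sum_{i,j}h(\nabla_{e_i}\tau_f,df(e_j))\,h(df(e_i),df(e_j))=\operatorname{div}_g\alpha_f-h(\tau_f,\operatorname{div}_g\sigma_f)$ with $\alpha_f(X)=h(\tau_f,\sigma_f(X))$ all check out, with the signs landing as stated after using the antisymmetries of $R^M$ and $R^N$. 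Note that the paper itself gives no proof of this lemma --- it is quoted verbatim from the cited reference of Nakauchi --- so there is no in-paper argument to compare against; your derivation is the standard Bochner-type computation that underlies that reference, and it correctly supplies the definition of $\alpha_f$, which the paper never states.
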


	\begin{proof}[Proof of Theorem \ref{thm1}]
%
%
%

		Let
		
		$$
		\begin{aligned}
			a_{i k} & =R_{ik}-(m-1)Kb_{ik} \\
			c_{i k} & =\sum_{j=1}^m h\left(d f\left(e_k\right), d f\left(e_j\right)\right) h\left(d f\left(e_i\right), d f\left(e_j\right)\right) .
		\end{aligned}
		$$
		
		$$
		\begin{aligned}
			& d_{i j}=-\sum_{k=1}^m h\left( R^N\left(d f\left(e_i\right), d f\left(e_k\right)\right) d f\left(e_k\right), d f\left(e_j\right)\right) \\
			& b_{i j}=h\left(d f\left(e_i\right), d f\left(e_j\right)\right)
		\end{aligned}
		$$
		
It is easy  to see that  the symmetric matrix $\left(a_{i j}\right)$ is positive semi-definite.

Notice that
		
$$\begin{aligned}
&\sum_{i, k=1}^m R_{i k} \sum_{j=1}^m h\left(d f\left(e_k\right), d f\left(e_j\right)\right) h\left(d f\left(e_i\right), d f\left(e_j\right)\right) 	\\
&-(m-1)K\sum_{k=1}^m h\left(\left(d f\left(e_i\right), d f\left(e_k\right)\right) d f\left(e_k\right), d f\left(e_j\right)\right)b_{ij}\\
=&R_{ik}b_{kj}b_{ij}-(m-1)Kb_{ik}b_{kj}b_{ij}\\
=&(R_{ik}-(m-1)Kb_{ik})b_{kj}b_{ij}\\
&\geq 0. 
\end{aligned}$$	
and

$$
\begin{aligned}
 & (m-1)K\sum_{k=1}^m h\left(\left(d f\left(e_i\right), d f\left(e_k\right)\right) d f\left(e_k\right), d f\left(e_j\right)\right)b_{ij}\\
 &-\sum_{i, j, k=1}^m h\left( R^N\left(d f\left(e_i\right), d f\left(e_k\right)\right) d f\left(e_k\right), d f\left(e_j\right)\right) b_{ij} \\
	 =&(m-1)Kb_{ik}b_{kj}b_{ij}-\sum_{i, j, k=1}^m h\left( R^N\left(d f\left(e_i\right), d f\left(e_k\right)\right) d f\left(e_k\right), d f\left(e_j\right)\right) b_{ij} \\
	=&(m-1)Kb_{ik}b_{kj}b_{ij}-a(b_{ik}b_{kj}-b_{ij}b_{kk})b_{ij}\\
	=&((m-1)K-a)b_{ik}b_{kj}b_{ij}+b_{ij}^2b_{kk}
\end{aligned}
$$

	By divergence  theorem on compact manifold, we have 
	
	\begin{equation}
		\begin{split}
		0	& \geq \int_M \quad\sum_{i, j, k=1}^m h\left(\left(\nabla_{e_k} d f\right)\left(e_i\right),\left(\nabla_{e_k} d f\right)\left(e_j\right)\right) h\left(d f\left(e_i\right), d f\left(e_j\right)\right) \\
			& \quad+\int_M\frac{1}{2}\left\|\nabla\left(f^* h\right)\right\|^2 \\
			& \quad+\int_M\sum_{i, j, k=1}^m h\left(d f\left( Ric^M\left(e_i, e_k\right)\left(e_k\right)\right), d f\left(e_j\right)\right) h\left(d f\left(e_i\right), d f\left(e_j\right)\right)\\
			&- (m-1)K\sum_{k=1}^m h\left(\left(d f\left(e_i\right), d f\left(e_k\right)\right) d f\left(e_k\right), d f\left(e_j\right)\right)b_{ij}dv \\
			& +\quad\int_M (m-1)K\sum_{k=1}^m h\left(\left(d f\left(e_i\right), d f\left(e_k\right)\right) d f\left(e_k\right), d f\left(e_j\right)\right)b_{ij}\\
			& -\sum_{i, j, k=1}^m h\left( R^N\left(d f\left(e_i\right), d f\left(e_k\right)\right) d f\left(e_k\right), d f\left(e_j\right)\right) h\left(d f\left(e_i\right), d f\left(e_j\right)\right) dv\\
			&\geq \int_M (R_{ik}-(m-1)Kb_{ik})b_{kj}b_{ij}+((m-1)K-a)b_{ik}b_{kj}b_{ij}+b_{ij}^2b_{kk} dv_g
		\end{split}
	\end{equation}

We can conclude from the above inequality that   $\nabla\left(f^* h\right)=0$ and $df=0.$

%
	\end{proof}

	\bibliographystyle{acm}	
	\bibliography{F:/2025laterusedbibfile/frommrefandmrlookups,D:/myonlybib/myonlymathscinetbibfrom2023, D:/myonlybib/low-quality-bib-to-publish, C:/Users/Administrator/Desktop/mybib2023}
\end{document}